\def\mystyle{}

\if\mystyle l
\documentclass[landscape]{amsart}
\else
\documentclass{amsart}
\fi

\synctex=1

\ifx\MyBeamer\undefined
\usepackage[breaklinks,colorlinks]{hyperref}
\usepackage[a4paper,margin=1in]{geometry}
\else
\if\MyBeamer t
\usepackage[breaklinks,colorlinks]{hyperref}
\usepackage[a4paper,margin=1in]{geometry}
\else
\def\emph{\alert}
\fi
\fi

\usepackage{amsmath}
\usepackage{amssymb}
\usepackage{mathrsfs}
\usepackage{amsthm}
\usepackage{aliascnt}
\usepackage[utf8]{inputenc}
\usepackage[T1]{fontenc}
\ifx\FrenchText\undefined
\usepackage[british]{babel}
\else
\usepackage[british,french]{babel}
\AtBeginDocument{%
\catcode`\:=12%
\catcode`\!=12%
}
\fi

\makeatletter

\sloppy

\newcounter{quotethmcnt}

\ifx\MyBeamer\undefined

\def\equationautorefname~#1\null{(#1)}
\def\itemautorefname~#1\null{#1}
\fi

\ifx\MyBeamer\undefined

\ifx\thmnum\undefined

\fi

\newcommand{\mynewthm}[3][]{%
  \newaliascnt{#2}{thmnum}%
  \newtheorem{#2}[#2]{#3}%
  \aliascntresetthe{#2}%
  \newtheorem*{#2*}{#3}%
  \expandafter\newcommand\csname #2autorefname\endcsname{#3}%
  \expandafter\renewcommand\csname the#2\endcsname{\thethmnum}%
}

\newtheorem*{clm}{Claim}
\newenvironment{clmprf}{%
  \begin{proof}[Proof of claim]%
  }{\end{proof}}

\else

\let\xxx=\frametitle
\def\frametitle#1{%
  \xxx{%
    \setbeamercolor*{math text}{use={titlelike,my math text},fg=titlelike.fg!80!my math text.fg}%
    #1}%
  \setbeamercolor{math text}{use=my math text,fg=my math text.fg}%
}

\newcommand{\beamerenv}[3]{%
\newenvironment<>{#1}%
{%
  \setbeamercolor{temp}{fg=structure.fg}%
  \setbeamercolor{structure}{fg=#2}%
  \setbeamercolor{block body}{use=structure,bg=structure.fg!5!white}%
  \begin{#3}%
}%
{\end{#3}\setbeamercolor{structure}{fg=temp.fg}}}

\newcommand{\mynewthm}[3][green!50!black]{%
  \newtheorem*{#2x}{#3}%
  \beamerenv{#2}{#1}{#2x}%
}

\beamerenv{other}{violet}{block}

\fi

\ifx\FrenchText\undefined
\newcommand{\myiffrench}[2]{#2}
\else
\newcommand{\myiffrench}[2]{\iflanguage{french}{#1}{#2}}
\fi

\theoremstyle{plain}
\mynewthm[red]{thm}{\myiffrench{Théorème}{Theorem}}
\mynewthm{prp}{Proposition}
\mynewthm[purple]{lem}{\myiffrench{Lemme}{Lemma}}
\mynewthm[orange]{fct}{\myiffrench{Fait}{Fact}}
\mynewthm[purple!50!white]{cor}{\myiffrench{Corollaire}{Corollary}}

\theoremstyle{definition}
\mynewthm[green!80!black]{dfn}{\myiffrench{Définition}{Definition}}
\mynewthm{hyp}{\myiffrench{Hypothèse}{Hypothesis}}
\mynewthm{conv}{Convention}
\mynewthm{conj}{Conjecture}
\mynewthm{ntn}{Notation}
\mynewthm{cst}{Construction}

\theoremstyle{remark}
\mynewthm[yellow!90!black]{rmk}{\myiffrench{Remarque}{Remark}}
\mynewthm{qst}{Question}
\mynewthm{exc}{\myiffrench{Exercice}{Exercise}}
\mynewthm{exm}{\myiffrench{Exemple}{Example}}

\ifx\MyBeamer\undefined

\newcommand{\myenumlabel}[1]{\textnormal{(\roman{#1})}}

\fi

\newcounter{cycprfcnt}
\newcounter{cycprffirst}
\newcommand{\cycprfpreamble}[1]%
{%
  \setcounter{cycprfcnt}{1}
  \setcounter{cycprffirst}{#1}
  \setlength{\itemindent}{0.5\leftmargin}%
  \setlength{\leftmargin}{0pt}%
  \newcommand{\cpcurr}{\myenumlabel{cycprfcnt}}%
  \newcommand{\cpnext}{\addtocounter{cycprfcnt}{1}\cpcurr}%
  \newcommand{\impnext}{\cpcurr{} $\Longrightarrow$ \cpnext.}%
  \def\makelabel##1{\ifnum\value{cycprffirst}=0\hspace{-0.7\itemindent}\setcounter{cycprffirst}{1}\fi##1}%
}%

{\begin{list}{\impnext}{\cycprfpreamble{#1}}}%
{\qedhere\end{list}}%

\newenvironment{cycprf*}[1][0]%
{\begin{list}{\impnext}{\cycprfpreamble{#1}}}%
{\end{list}}%

\def\indsym#1#2{%
  \setbox0=\hbox{$\m@th#1x$}%
  \kern\wd0%
  \hbox to 0pt{\hss$\m@th#1\mid$\hbox to 0pt{$\m@th#1^{#2}$\hss}\hss}%
  \lower.9\ht0\hbox to 0pt{\hss$\m@th#1\smile$\hss}%
  \kern\wd0}

\def\nindsym#1#2{%
  \setbox0=\hbox{$\m@th#1x$}%
  \kern\wd0%
  \hbox to 0pt{\hss$\m@th#1\not$\kern1.4\wd0\hss}
  \hbox to 0pt{\hss$\m@th#1\mid$\hbox to 0pt{$\m@th#1^{#2}$\hss}\hss}%
  \lower.9\ht0\hbox to 0pt{\hss$\m@th#1\smile$\hss}%
  \kern\wd0}

\def\dotminussym#1#2{%
  \setbox0=\hbox{$\m@th#1-$}%
  \kern.5\wd0%
  \hbox to 0pt{\hss\hbox{$\m@th#1-$}\hss}%
  \raise.6\ht0\hbox to 0pt{\hss$\m@th#1.$\hss}%
  \kern.5\wd0}

\renewcommand{\emptyset}{\varnothing}
\renewcommand{\setminus}{\smallsetminus}

\DeclareMathOperator{\Aut}{Aut}

\DeclareMathOperator{\Gal}{Gal}

\newcommand{\fF}{\mathfrak{F}}

\newcommand{\fH}{\mathfrak{H}}
\newcommand{\fI}{\mathfrak{I}}

\newcommand{\fL}{\mathfrak{L}}
\newcommand{\fM}{\mathfrak{M}}

\makeatother

\DeclareMathOperator{\codim}{codim}
\def\bbN{\mathbb N}
\def\Fix{\mbox{Fix}}
\def\Gal{\mbox{Gal}}
\def\aut{\mbox{Aut}}

\def\M{\mathfrak M}

\def\Ind#1#2{#1\setbox0=\hbox{$#1x$}\kern\wd0\hbox to 0pt{\hss$#1\mid$\hss}
  \lower.9\ht0\hbox to 0pt{\hss$#1\smile$\hss}\kern\wd0}

\def\Notind#1#2{#1\setbox0=\hbox{$#1x$}\kern\wd0\hbox to 0pt{\mathchardef
    \nn="3236\hss$#1\nn$\kern1.4\wd0\hss}\hbox to 0pt{\hss$#1\mid$\hss}\lower.9\ht0
  \hbox to 0pt{\hss$#1\smile$\hss}\kern\wd0}

\theoremstyle{plain}
\newtheorem{theorem}{Theorem}[section]
\newtheorem{proposition}[theorem]{Proposition}
\newtheorem{fact}[theorem]{Fact}
\newtheorem{lemma}[theorem]{Lemma}
\newtheorem{corollary}[theorem]{Corollary}
\newtheorem*{claim}{Claim}

\theoremstyle{definition}
\newtheorem{definition}[theorem]{Definition}
\newtheorem{remark}[theorem]{Remark}
\newtheorem{expl}[theorem]{Example}

\def\bsp{\begin{expl}}
  \def\ebsp{\end{expl}}
\def\beh{\begin{claim}}
  \def\ebeh{\end{claim}}
\def\defn{\begin{definition}}
  \def\edefn{\end{definition}}
\def\satz{\begin{theorem}}
  \def\esatz{\end{theorem}}
\def\tats{\begin{fact}}
  \def\etats{\end{fact}}
\def\kor{\begin{corollary}}
  \def\ekor{\end{corollary}}
\def\lmm{\begin{lemma}}
  \def\elmm{\end{lemma}}
\def\bem{\begin{remark}}
  \def\ebem{\end{remark}}
\def\bew{\begin{proof}}
  \def\ebew{\end{proof}}

\def\satzli{\begin{proposition}}
  \def\esatzli{\end{proposition}}

\begin{document}
\title{A metric version of Schlichting's Theorem}
\author{Ita\"\i\ Ben Yaacov and Frank O.\ Wagner}

\address{Universit\'e de Lyon; CNRS; Universit\'e Claude Bernard Lyon 1; Institut Camille Jordan UMR5208, 43 bd du 11 novembre 1918, 69622 Villeurbanne Cedex, France}

\urladdr{\url{http://math.univ-lyon1.fr/~begnac/}}
\urladdr{\url{http://math.univ-lyon1.fr/~wagner/}}
\email{benyaacov@math.univ-lyon1.fr}
\email{wagner@math.univ-lyon1.fr}

\keywords{Schlichting's Theorem, uniformly commensurable, close-knit family, continuous logic, invariant}
\date{\today}
\subjclass[2000]{03G10, 05E15, 20M10; 06A12, 12L12, 20E15}
\thanks{Partially supported by ANR-13-BS01-0006 ValCoMo}
\begin{abstract}If $\fF$ is a type-definable family of commensurable subsets, subgroups or sub-vector spaces in a metric structure, then there is an invariant subset, subgroup or sub-vector space commensurable with $\fF$. This in particular applies to type-definable or hyper-definable objects in a classical first-order structure.\end{abstract}
\maketitle

\section*{Introduction}

Schlichting's Theorem \cite{sch} states that if a subgroup $H$ of a group $G$ is uniformly commensurable with all its $G$-conjugates, then it is commensurable with a normal subgroup of $G$. This was generalized by Bergman and Lenstra \cite{bl}, who showed that if $H$ is uniformly commensurable with all its $K$-conjugates for some subgroup $K$ of $G$, then it is commensurable with a $K$-invariant subgroup. Peter Neumann deduced from this an analogous theorem for sets: A family of subsets of some set $\Omega$ invariant under
a subgroup $K$ of Sym$(\Omega)$ with bounded symmetric differences
yields a $K$-invariant subset whose symmetric difference with
members of the original family is bounded. This was studied further by Brailovsky, Pasechnik and Praeger \cite{bpp}, Neumann \cite{neu} and the second author \cite{wa98}, who proved a version for vector spaces, as well as more general objects.

Meanwhile, a similar theorem was shown for type-definable groups in simple theories, where the finite index condition of commensurability is re-interpreted as {\em bounded} index \cite[Theorem 4.5.13]{wa00} (building on results of Hrushovski for the S1-case). However, simplicity seemed a necessary condition, as the proof is based on the Independence Theorem.

Recently, the second author proved a hyperdefinable version of Schlichting's Theorem in \cite{wa16} without any hypotheses on the ambient theory. In this note we shall rephrase the result in the language of continuous logic and metric structures, and generalize it to families of sub-objects other than groups. As already in \cite{wa98} it will follow from a corresponding fixed point theorem for a certain kind of lattice.

It should be noted that in \cite{Udi} Hrushovski has generalized the so-called {\em Stabilizer Theorem}, which in simple theories is closely related to Schlichting's Theorem, to a much more general context, assuming the existence of an S1-ideal of ``small'' formul\ae. An intriguing question is thus to what extent our version of Schlichting's Theorem is related to Hrushovski's Stabilizer Theorem, and whether our approach might be used to generalize the Stabilizer Theorem even further.

Finally, in the last section we use Galois Theory to deduce a new version of Schlichting's Theorem for (finitely) commensurable fields.

\section{Close-knit Families}
Recall that Peter Neumann \cite{neu} calls a family $\fF$ of subsets of some ambient set {\em close-knit} if there is a finite bound on the cardinality of the difference $F\setminus F'$ for $F,F'\in\fF$. This was generalized by the second author \cite{wa98} to a family $\fF$ of points of a lattice with some integer-valued distance function $\delta$: it is {\em close-knit} if $\delta(F,F')$ is bounded for all $F,F'\in\fF$. In our set-up, the distance function on the lattice is no longer integer-valued; finiteness of the number of values is replaced by a suitable compactness condition.

\begin{dfn}
  \label{dfn:CloseKnit}
  Assume the following data are given:
  \begin{itemize}
  \item A $\kappa$-complete lower semi-lattice $\fL$ (i.e.\ every sub-family of size $<\kappa$ admits an infimum), for some regular cardinal $\kappa$.
  \item A family $\fF = \{f_a : a \in A\} \subseteq \fL$, where the enumeration may have repetitions.
  \item A compact Hausdorff topological space $\fI$ equipped with a closed partial order relation.
    We let $\lambda = w(\fI)^+ + \aleph_0$, where $w$ denotes the weight of $\fI$ (i.e., the least cardinal of an open basis), and require that $\kappa \geq \lambda$.
  \item A map $\delta\colon \fL \times A \rightarrow \fI$.
  \item A map $\fF_\kappa \times A \rightarrow \fL$ denoted $(s,a) \mapsto s^a$, where $\fF_\kappa \subseteq \fL$ denotes the family of meets of $<\kappa$ elements of $\fF$.
  \end{itemize}
  We say that $\fF$ (together with the additional data) is a \emph{close-knit family} if the following holds:
  \begin{enumerate}
  \item \label{item:CloseKnitDeltaContinuous}
    The map $\delta$ is monotonous and ``upper semi-continuous'' in its first argument in the sense that if $S \subseteq \fL$ is closed under finite meet and $|S| < \kappa$, then $\delta( \bigwedge S, a ) = \bigwedge \, \{\delta(s,a):s\in S\}$.
  \item \label{item:CloseKnitDeltaCompact}
    Let $\xi \in \fI$ and let $S \subseteq \fL$ be closed under finite meet such that $|S| < \kappa$.
    Assume that for every neighbourhood $\xi \in U \subseteq \fI$ and $s \in S$ there exists $\zeta \in U$ and $a \in A$ such that $\delta(s,a) \geq \zeta$.
    Then there exists $a \in A$ satisfying $\delta(s,a) \geq \xi$ for all $s \in S$.
    (In this case, $\delta\bigl( \bigwedge S, a \bigr) \geq \xi$ as well, by \autoref{item:CloseKnitDeltaContinuous}.)
  \item \label{item:CloseKnitIncrement}
    We have $s\le s^a$, and whenever $t \leq s$ are in $\fF_\kappa$ with $\delta(t,a)=\delta(s,a)$, then $t^a=s^a$.
  \item \label{item:CloseKnitLongDecreasingSequence}
    For any $s\in\fF_\lambda$, where $\fF_\lambda$ denotes the family of meets of $<\lambda$ elements of $\fF$, there is some cardinal $\mu_s < \kappa$ such that for all $a \in A$, any chain in $\fL$ between $s$ and $s^a$ has cardinality $<\mu_s$.
  \end{enumerate}
  If $\Gamma$ is a group of automorphisms of $\fL$, also acting on $A$, and all the data (namely, the maps $a \rightarrow f_a$, $\delta$ and $(s,a) \mapsto s^a$) are invariant under $\Gamma$, then we say that $\fF$ is a \emph{$\Gamma$-close-knit family}.
\end{dfn}

We should think of $\delta(s,a)$ as a measure of how much $s \nleq f_a$.
In particular, in most applications $s \leq f_a$ if and only if $\delta(s,a) = 0$.
The idea behind the map $(s,a) \mapsto s^a$ is to increase $s$ a little, toward being greater than $f_a$.
For example, in the next section, when $s$ and $f_a$ represent sets of small symmetric difference, we let $s^a = s \cup f_a$.
Similarly, when they represent vector spaces not much bigger than their intersection, we achieve the same effect with $s^a = s + f_a$.
However, unlike what these examples might suggest, we cannot actually require that $s^a \geq f_a$, since this fails in the setting of commensurable groups.

\begin{thm}
  \label{thm:CloseKnit}
  Let $\fL$ be a $\kappa$-complete lower semi-lattice, $\Gamma$ a group of automorphisms of $\fL$, and $\fF$ a $\Gamma$-close-knit family in $\fL$.
  Then $\Gamma$ has a fixed point in $\fL$.
\end{thm}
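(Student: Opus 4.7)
The strategy is to locate an element $s^* \in \fF_\lambda$ that is \emph{$\delta$-minimal}, in the sense that $\delta(s^*, a) = \inf_{t \in \fF_\kappa} \delta(t,a)$ for every $a \in A$, and then use conditions (iii) and (iv) to force $\Gamma$-invariance of $s^*$, or of a small further meet over its $\Gamma$-orbit.

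First I would construct $s^*$. For each $a \in A$, the family $\{\delta(t,a) : t \in \fF_\kappa\}$ is downward-directed in the compact pospace $\fI$ by condition (i) together with the closure of $\fF_\kappa$ under $<\kappa$-meets, so it has an infimum $\xi_a$, realized by some $<\kappa$-meet. To combine these into a single $s^* \in \fF_\lambda$ with $\delta(s^*, a) = \xi_a$ for \emph{all} $a$ simultaneously, I would invoke condition (ii) together with the weight bound $w(\fI) < \lambda$: only $<\lambda$-many ``coordinates'' of $\fI$ need to be separately controlled. The resulting $s^*$ automatically satisfies $\delta(s^* \wedge t, a) = \delta(s^*, a)$ for every $t \in \fF_\kappa$, because $\xi_a$ is already the global infimum.

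Next I would exploit $\Gamma$-equivariance. The function $\xi$ is $\Gamma$-invariant (substituting $s \mapsto \gamma s$ in the infimum), so every $\gamma(s^*) \in \fF_\lambda$ is also $\delta$-minimal. Applying $\delta$-minimality of $s^*$ with $t = \gamma(s^*)$ yields $\delta(s^* \wedge \gamma(s^*), a) = \delta(s^*, a)$ for every $a$, whence by (iii), $(s^* \wedge \gamma(s^*))^a = (s^*)^a$. Now transfinitely iterate: set $u_0 = s^*$, $u_{\alpha+1} = u_\alpha \wedge \gamma_\alpha(u_\alpha)$ with $\gamma_\alpha \in \Gamma$ chosen (if possible) so that the meet is strictly smaller, and $u_\alpha = \bigwedge_{\beta<\alpha} u_\beta$ at limits. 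Each $u_\alpha$ remains $\delta$-minimal and, by iterating (iii) and (i), satisfies $(u_\alpha)^a = (s^*)^a$ throughout. The strictly decreasing chain $u_0 > u_1 > \cdots > u_\alpha$ then lies in $[u_\alpha, u_\alpha^a]$, and whenever $u_\alpha \in \fF_\lambda$ condition (iv) bounds its length by $\mu_{u_\alpha} < \kappa$. Combined with $\kappa$-completeness of $\fL$ and regularity of $\kappa$, this forces the iteration to stabilize at some $\alpha^* < \kappa$; at stabilization, $u_{\alpha^*} \leq \gamma(u_{\alpha^*})$ for every $\gamma \in \Gamma$, and applying the same with $\gamma^{-1}$ in place of $\gamma$ yields $u_{\alpha^*} = \gamma(u_{\alpha^*})$, the desired $\Gamma$-fixed point.

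The main obstacle will be the precise compactness argument producing $s^*$: turning the $a$-by-$a$ infima into a single element of $\fF_\lambda$ via (ii) and the weight bound needs care. A secondary difficulty is the stabilization step, since $\mu_{u_\alpha}$ can depend on $u_\alpha$ and $u_\alpha$ may leave $\fF_\lambda$ at transfinite limit stages, so one must track meet-sizes delicately to ensure the iteration terminates before exhausting the $\kappa$-completeness of $\fL$.
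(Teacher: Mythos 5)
Your proposal has two genuine gaps, both at places where the paper's actual argument takes a structurally different turn. First, the object $s^*$ you want to construct --- a single element of $\fF_\lambda$ with $\delta(s^*,a)=\inf_{t\in\fF_\kappa}\delta(t,a)$ for \emph{every} $a\in A$ simultaneously --- need not exist. The weight bound $w(\fI)<\lambda$ lets you realise the infimum for each \emph{fixed} $a$ by a meet of $<\lambda$ elements (a decreasing net in a compact pospace of small weight has a small cofinal subnet, then apply (i)), but the index set $A$ is not bounded by $\kappa$, and condition (ii) points in the opposite direction from how you use it: it produces a single $a$ working for a small family $S$ of lattice elements, not a single lattice element working for all $a$. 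The paper sidesteps pointwise minimisation entirely by attaching to each $s$ the closed set $m(s)=\{\xi\in\fI:\xi\le\delta(s,a)\text{ for some }a\}$, which forgets which $a$ realises which value; descending chains of such closed sets have length $<\lambda$, so a minimal $m$ exists, and ``strong'' elements (those with $m(s)=m$) play the role you intend for $s^*$ --- but they only minimise the \emph{range} of $\delta(s,\cdot)$ up to downward closure, not its value at each $a$.

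Second, the stabilisation step fails for the reason you yourself flag but do not resolve: your decreasing chain $u_0>u_1>\cdots>u_\alpha$ does not sit inside an interval $[s,s^a]$ for any \emph{fixed} $s\in\fF_\lambda$ (its elements drop below $u_0$, and $u_\alpha$ leaves $\fF_\lambda$ after $\lambda$ steps), so condition (iv) gives no uniform bound on its length; moreover a fixed point of the form $\bigwedge_\gamma\gamma(s^*)$ may require a meet of $\ge\kappa$ elements, which $\fL$ need not admit. The paper's resolution is to never make the decreasing meets themselves stabilise. Instead it defines, for strong $s$, the set $A(s)$ of those $a$ with $\delta(s,a)$ \emph{maximal} in $m$, and $n(s)=\bigwedge\{s^a:a\in A(s)\}$; condition (iii) then makes $n$ \emph{increasing} as $s$ decreases, and all the $n(s_\alpha)$ along a decreasing sequence starting at a fixed strong $s_0\in\fF_\lambda$ live in the fixed interval $[s_0,s_0^a]$ for $a\in A(s_\mu)$, so (iv) applies and forces a greatest $n(s)$ to exist. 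That greatest element is canonical, hence $\Gamma$-invariant --- no transfinite intersection over the orbit is ever taken. You would need to import both of these ideas (the closed-set minimisation and the dual, increasing family $n(s)$) to repair the argument.
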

\begin{proof}
  For $s\in\fL$ define
  \begin{gather*}
    m(s) = \bigl\{ \xi \in \fI : \xi \leq \delta(s,a) \text{ for some } a \in A \bigr\}.
  \end{gather*}
\autoref{dfn:CloseKnit}\autoref{item:CloseKnitDeltaCompact} with $S=\{s\}$ implies that $m(s)$ is closed. As $\lambda= w(\fI)^+ + \aleph_0$, it follows that there is no strictly decreasing chain $(m(s_i):i<\lambda)$ with $s_i\in\fL$. But if $s,t\in\fL$ with $t\leq s$, then $\delta(t,a) \leq \delta(s,a)$ for all $a\in A$,  whence $m(t) \subseteq m(s)$. Hence $\{m(s):s\in\fF_\kappa\}$ has a unique minimal element $m$.
We call $s\in\fF_\kappa$ {\em strong} if $m(s)=m$.
 
If $S\subset\fL$ is closed under finite meets with $|S|<\kappa$ and $\xi \notin m(\bigwedge S)$, then by \autoref{dfn:CloseKnit}\autoref{item:CloseKnitDeltaCompact} there exists $s\in S$ such that $\xi \notin m(s)$. 
It follows that for any strong $s\in\fF_\kappa$ there is some strong $s'\in\fF_\lambda$ with $s\le s'$ (and in fact $s'$ is a subintersection of $s$).
  For strong $s$ we define
  \begin{align*}
    A(s) &= \bigl\{ a \in A : \delta(s,a) \text{ is maximal in } m \bigr\},\quad\mbox{and} \\
    n(s) &= \bigwedge \, \bigl\{ s^a : a \in A(s) \bigr\}.
  \end{align*}
  Since $m$ is closed, it contains maximal elements, so $A(s)$ is non-empty.
  Note that if $t \in \fF_\kappa$ with $t\le s$, then $m(t)\subseteq m(s)=m$, so $t$ is also strong by minimality of $m$, and $A(t) \subseteq A(s)$.
  If, in addition, we have $a \in A(t)$, then $\delta(t,a)=\delta(s,a)$ (since both are maximal in $m$, and they are comparable), whence $t^a=s^a$ by \autoref{dfn:CloseKnit}\autoref{item:CloseKnitIncrement}.
 In particular, if $\fF_\lambda\ni s'\ge s$ is strong and $a \in A(s)$ we have $s\le s'\leq s'^a = s^a$, so by \autoref{dfn:CloseKnit}\autoref{item:CloseKnitLongDecreasingSequence} applied to $s'$, the meet $n(s)$ is indeed defined.
Note that for $t\le s$ we have $n(t)\ge n(s)$.

Suppose there is no greatest $n(s)$ for strong $s$. Choose strong $s_0\in\fF_\lambda$ and let $\mu = \mu_{s_0}$ as per \autoref{dfn:CloseKnit}\autoref{item:CloseKnitLongDecreasingSequence}.
  Since $\mu < \kappa$, we may then construct by induction a sequence $(s_\alpha)_{\alpha \leq \mu}$ of strong elements, starting with $s_0$.
  At successor stages take some strong $t$ such that $n(t) \nleq n(s_\alpha)$ and let $s_{\alpha+1} = s_\alpha \wedge t$, so $n(s_{\alpha+1})\ge n(t)$ and $n(s_{\alpha+1}) \ge n(s_\alpha)$, whence $n(s_{\alpha+1}) > n(s_\alpha)$. 
  At limit stages put $s_\alpha = \bigwedge_{\beta<\alpha} \, s_\beta$.
  If $a \in A(s_\mu)$ then for every $\alpha \leq \mu$ we have $a \in A(s_\alpha)$ and
  \begin{gather*}
    s_0 \leq n(s_0) \leq n(s_\alpha) \leq s_\alpha^a = s_0^a.
  \end{gather*}
  This produces a chain of length $\mu$ between $s_0$ and $s_0^a$, contradicting the choice of $\mu$.
  Therefore there exists a greatest $n(s) \in \fL$; as it must be unique, it is a fixed point of $\Gamma$.
\end{proof}
If $\fI$ is finite then $\lambda = \aleph_0$; if all $\mu_s$ are finite, we can also take $\kappa = \aleph_0$ (in particular, condition \autoref{item:CloseKnitDeltaCompact} of \autoref{dfn:CloseKnit} holds automatically), and \autoref{thm:CloseKnit} allows us to recover \cite[Theorem 1]{wa98} at least qualitatively. The finite index/difference/codimension versions of Schlichting's Theorem follow.

\section{Almost invariant families in continuous logic}

Recall that in a $\kappa$-saturated metric structure $\M$ with domain $M$, a subset of $M^n$ is \emph{type-definable} if it is the intersection of the zero-sets of fewer than $\kappa$ many formul\ae.
Given a set $F \subseteq M^\alpha \times M^\beta$ and $a \in M^\beta$, we define $F_a = \bigl\{ b : (b,a) \in F \bigr\}$.
A family $\fF$ of subsets of $M^\alpha$ is \emph{type-definable} if there are type-definable sets $F \subseteq M^\alpha \times M^\beta$ and $A \subseteq M^\beta$ such that $\fF = F_A =  \{F_a : a \in A\}$.

\begin{dfn}
  We shall consider a type-definable ambient object $X$ of one of three kinds: sets, groups or vector-spaces over a definable field $K$.
  For two type-definable sub-objects of the same kind $S,S' \subseteq X$, we say that $S$ is \emph{commensurably contained} in $S'$ if
  \begin{itemize}
  \item $|S\setminus S'|$, or
  \item $[S: S \cap S']$, or
  \item $\codim_S (S \cap S')$, respectively,
  \end{itemize}
  is strictly less than $\kappa$.
  We say that $S$ and $S'$ are \emph{commensurable} if either is commensurably contained in the other.
\end{dfn}

Note that by $\kappa$-saturation this implies that the difference/index/codimension does not increase when we replace $\M$ by an elementary extension; we say that it is {\em bounded}.
In the following, whenever we talk about type-definable sets, we shall assume that $\M$ is sufficiently saturated.
\begin{dfn}
  Let $X$ be a type-definable set/group/vector space over $K$ in a metric structure $\M$.
  A type-definable family $\fF$ of subsets/subgroups/subspaces of $X$ is {\em almost invariant} if $F$ and $F'$ are commensurable for all $F,F'\in\fF$.
  For an almost invariant family $\fF$, we shall say that a subset/subgroup/subspace $S$ is commensurable with $\fF$ if $S$ is commensurable with some (equivalently, all) $F\in\fF$.
\end{dfn}
\begin{rmk} If $\fF$ is an almost invariant type-definable family, then by compactness the commensurability must be {\em uniform}: There is a cardinal $\lambda<\kappa$ such that the difference/index/codimension is bounded by $\lambda$ for all $F,F'\in\fF$. This implies that if $S$ is commensurable with $\fF$, it is {\em uniformly} commensurable with $\fF$, i.e.\ the difference/index/codimension is bounded independently of $F\in\fF$.\end{rmk}

\begin{thm}\label{thm:schlichting}
  Let $X$ be a type-definable set/group/vector space over a definable field 
  $K$ in a metric structure $\M$, and $\Gamma$ a type-definable group of automorphisms of $X$.
  Suppose $\fF$ is a $\Gamma$-invariant almost invariant type-definable family of subsets/subgroups/subspaces of $X$.
  Then there is a $\Gamma$-invariant subset/subgroup/subspace $N$ of $X$ commensurable with all $F\in\fF$, which is moreover type-definable over the same parameters.
\end{thm}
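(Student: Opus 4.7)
The approach is to realise $\fF$ as a $\Gamma$-close-knit family in a suitably chosen lattice and invoke \autoref{thm:CloseKnit}, taking $N$ to be the fixed point thus obtained. Fix a set $B$ of parameters over which $X$, $\Gamma$ and $\fF$ are type-definable, and let $\kappa$ be strictly less than the saturation cardinal of $\M$, which we may enlarge at will. Let $\fL$ be the collection of type-definable sub-objects of $X$ of the relevant kind (subsets, subgroups, or $K$-subspaces), ordered by inclusion and closed under meets of size $<\kappa$; this is a $\kappa$-complete lower semi-lattice on which $\Gamma$ acts via its action on $X$, and $\fF = \{F_a : a \in A\} \subseteq \fL$.

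For the target space $\fI$ of $\delta$ take the Vietoris hyperspace of closed subsets of an appropriate type space $S(B)$ over $B$, ordered by inclusion: this is compact Hausdorff with a closed partial order, of weight at most $|B| + \aleph_0$, so we may arrange $\kappa \geq \lambda = w(\fI)^+ + \aleph_0$. Define $\delta(s,a) \in \fI$ to be the closed set of types $\tp(b,a/B)$ where the pair $(b,a)$ witnesses $s \nleq F_a$: namely, $b \in s \setminus F_a$ in the set and vector-space cases, and $b \in s \setminus F_a$ representing a non-trivial coset of $s \cap F_a$ in $s$ in the group case (encoding $a$ into the type ensures that $\fI$ does not depend on $a$). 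Take the increment $s^a = s \cup F_a$ for sets and $s^a = s + F_a$ for subspaces; in the group case we cannot demand $s^a \supseteq F_a$, as warned after \autoref{dfn:CloseKnit}, and a more delicate choice in the spirit of the hyperdefinable Schlichting theorem of \cite{wa16} is required.

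Clauses \autoref{item:CloseKnitDeltaContinuous}--\autoref{item:CloseKnitIncrement} of \autoref{dfn:CloseKnit} are then essentially formal: monotonicity of $\delta$ is tautological, the upper semi-continuity and compactness conditions reduce to compactness of $S(B)$ together with saturation of $\M$, and the increment condition holds because two comparable type-definable sub-objects realising the same types over $B$ must coincide. The main obstacle is clause \autoref{item:CloseKnitLongDecreasingSequence}: for $s \in \fF_\lambda$ one must bound the length of chains between $s$ and $s^a$. This is where almost-invariance of $\fF$ enters, in the form that a meet of $<\lambda$ pairwise commensurable sub-objects remains commensurable with each of them (by a union or product bound on the bounded cardinal difference, index or codimension, using $\lambda<\kappa$ together with the regularity of $\kappa$); hence $s$ and $s^a$ are commensurable, and every strictly increasing chain between them realises a new type at each step, capping its length by some $\mu_s < \kappa$. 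Applying \autoref{thm:CloseKnit} we obtain a $\Gamma$-fixed $N \in \fL$; tracing the proof, $N = \bigwedge\{s^a : a \in A(s)\}$ for some strong $s \in \fF_\lambda$, which unfolds to a join of $s$ with a small meet of elements of $\fF$, and is therefore commensurable with every $F \in \fF$. Since the entire construction is canonical over $B$, the resulting sub-object $N$ is type-definable over the same parameters.
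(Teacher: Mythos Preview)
Your reduction to \autoref{thm:CloseKnit} is the right strategy, and your choices of $s^a$ match the paper's (in the group case the paper takes $s^a = \bigcap_{g \in s}(sF_a)^g$, which is what ``the spirit of \cite{wa16}'' amounts to). However, your choice of $\fI$ and $\delta$ does not work, and the failures are not merely cosmetic.

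First, $\delta(s,a)$ as you define it is not an element of the Vietoris hyperspace: the condition $b \notin F_a$ is the \emph{negation} of a type-definable condition, so $s \setminus F_a$ is relatively open in $s$, and the set of types $\tp(b,a/B)$ it realises need not be closed. (Concretely, if $F_a$ is the zero-set of a formula $\varphi(x,a)$ and $s = X$, the types in question are exactly those with $\varphi > 0$, an open set.) Second, and more fundamentally, even after taking closures your verification of condition~\autoref{item:CloseKnitIncrement} breaks down. Elements $t \leq s$ of $\fF_\kappa$ are intersections $\bigcap_i F_{a_i}$, hence type-definable over $B$ together with the extra parameters $a_i$; membership in $t$ or $s$ is therefore \emph{not} determined by $\tp(b,a/B)$. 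So one can easily have $x \in s \setminus (t \cup F_a)$ whose type $\tp(x,a/B)$ is already realised by some $y \in t \setminus F_a$: then $\delta(t,a) = \delta(s,a)$ while $t^a = t \cup F_a \subsetneq s \cup F_a = s^a$. Your sentence ``two comparable type-definable sub-objects realising the same types over $B$ must coincide'' is simply false once the objects carry parameters outside $B$. The same problem undermines your argument for condition~\autoref{item:CloseKnitLongDecreasingSequence}: a strict chain between $s$ and $s^a$ need not realise new types over $B$ at each step.

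The paper avoids all of this by taking $\fI = [0,1]^{\Phi \times \bbN}$ and letting $\delta_{\varphi,n}(s,a)$ measure, roughly, how well $n$ elements of $s$ can be chosen to all lie $\varphi$-far from $F_a$ (and, in the set case, from each other). The point of the index $n$ is to encode a \emph{count}: if $t \subseteq s$ and some $x \in s \setminus (t \cup F_a)$, one adjoins $x$ to $n{-}1$ witnesses coming from $t$ to obtain $n$ witnesses in $s$, strictly increasing some coordinate of $\delta$. Your type-space $\delta$ loses exactly this multiplicity information. A secondary point: the paper takes $\fL$ to consist only of sub-objects commensurable with $\fF$, so that the fixed point is automatically commensurable; and it lets $\Gamma \cup \Aut(\fM)$ act, so that the fixed point is automatically $\emptyset$-type-definable, rather than appealing to canonicity after the fact.
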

\begin{proof}
  We may assume that $X$, $\Gamma$ and $\fF = \{F_a : a \in A\}$ are type-defined over $\emptyset$, and $K$ is definable over $\emptyset$.
  Notice that then we may also enumerate the family $\fF$ as $\{F_{a,\gamma} : a \in A, \gamma \in \Gamma\}$ where $F_{a,\gamma} = \gamma F_a$.
  Let $S\subseteq X$ be type-definable.
  Suppose $F_a$ is defined (for $a \in A$) by $\Phi(x,a)=0$, where $\Phi$ is a family of $[0,1]$-valued formul\ae\ closed under the connective $\max$, and $|\Phi| < \kappa$.
  For $\gamma \in \Gamma$, $n \in \bbN$ and $\varphi \in \Phi$ define
  \[\begin{array}{ll}
  \bullet\mbox{ in the set case:}&
      \delta_{\varphi,n}(S,a,\gamma) = \sup_{x \in S^n}\,\bigl(\min_{i < j < n} d(\gamma^{-1} x_i,\gamma^{-1} x_j) \wedge \min_{i<n} \varphi(\gamma^{-1} x_i,a)\bigr),\\
  \bullet\mbox{ in the group case:}&
      \delta_{\varphi,n}(S,a,\gamma) = \sup_{x \in S^n} \, \min_{i < j < n} \varphi\bigl( \gamma^{-1}(x_i^{-1} x_j),a \bigr),\\
  \bullet\mbox{ in the vector space case:}&
     \delta_{\varphi,n}(S,a,\gamma) = \sup_{x \in S^n} \, \inf_{\eta \in K^n\setminus\bar0}\varphi\bigl(\gamma^{-1}(\sum \eta_i x_i), a \bigr).
    \end{array}\]
  Let $\fI = [0,1]^{\Phi \times \bbN}$, so $\lambda = |\Phi|^+ + \aleph_0$.
  For type-definable $S \subseteq X$ and $(a,\gamma) \in A \times \Gamma$ define
  \begin{gather*}
    \delta(S,a,\gamma) = \bigl( \delta_{\varphi,n}(S,a,\gamma) : (\varphi,n) \in \Phi \times \bbN \bigr)\in\fI.
  \end{gather*}
  Let $\fL$ be the lower semi-lattice of type-definable subsets/subgroups/subspaces commensurable with $\fF$.
  Then conditions \autoref{item:CloseKnitDeltaContinuous} and \autoref{item:CloseKnitDeltaCompact} of \autoref{dfn:CloseKnit} hold by compactness.

For $S\in\fL$ and $F \in \fF$ put $S^F=S\cup F$ in the set case, and $S^F=S+F$ in the vector space case. In the group case, put
$$S^F=\bigcap_{s\in S}(SF)^s$$
and note that $S^F$ is a supergroup of $S$ with $[S^F:S]$ bounded. Moreover, since $S$ and $F$ are commensurable, there is a bounded set $I\subseteq S$ with $S=(S\cap F)I$. Then 
$S^F=\bigcap_{s\in S}(SF)^s=\bigcap_{s\in I}(SF)^s$, so $S^F$ is type-definable,
Finally, put $S^{a,\gamma}=S^{F_{a,\gamma}}$.

We claim that in all three cases, condition \autoref{item:CloseKnitIncrement} of \autoref{dfn:CloseKnit} holds. Clearly $S\subseteq S^F$. So assume that $T \subseteq S$ and $T^F \neq S^F$, where $F = F_{a,\gamma}$.

  In the set case, since $T\cup F\subseteq S\cup F$, this means that there exists $x \in S \setminus (T \cup F)$.
  In particular, $d(\gamma^{-1} x,\gamma^{-1} T) > 0$, so for some $\varphi \in \Phi$ and $0<e<1$ we have $d(\gamma^{-1} x,\gamma^{-1} y) \wedge \varphi(\gamma^{-1} x, a) \geq e$ for all $y \in T$. Since $T$ and $F$ are commensurable, there exists $n$ such that $\delta_{\varphi,n}(T,a,\gamma) < e$, and we may assume that $n$ is least such. As
$\delta_{\varphi,0} \equiv 1$, we have $n>0$, and $\delta_{\varphi,n-1}(T,a,\gamma) \geq e$ by minimality of $n$. Therefore $\delta_{\varphi,n}(S,\gamma) \geq e$, so $\delta(T,F) < \delta(S,F)$.

 In the group case, note that if $T\le S$ with $SF=TF$ and $I\subset T$ is a system of representatives for $S/(S\cap F)\cong SF/F=TF/F\cong T/(T\cap F)$, then
$$S^F=\bigcap_{s\in S}(SF)^s= \bigcap_{s\in I}(SF)^s=\bigcap_{s\in I}(TF)^s=  \bigcap_{s\in T}(TF)^s= T^F.$$
It hence suffices to show that $S\subseteq TF$, as then $SF=TF$. Suppose not, and consider $x \in S \setminus TF$.
  In other words $y^{-1}x \notin F$ for all $y \in T$.
  By compactness, for some $\varphi \in \Phi$ and $0<e<1$ the partial type $y \in T$ implies that $\varphi\left( \gamma^{-1}(y^{-1} x) ,a \right) \geq e$.
  We conclude as above.

  In the vector space case, since $T+F\le S+F$, this means that there is $x\in S\setminus(T+F)$.
That is, $\eta x+y\notin F$ for all $\eta\in K^\times$ and $y\in T$. By compactness, for some $\varphi \in \Phi$ and $0<e<1$ the partial type $y \in T$ implies that $\inf_{\eta\in K^\times}\varphi\bigl(\gamma^{-1}(\eta x+y) ,a \bigr) \geq e$.
  Again we conclude as above.

  If $F\in\fF$ and $S\in\fF_\lambda$, then $S^F$ is type-definable with strictly less than $\lambda$ parameters.
  Since $F$ and $S$ are commensurable, the difference $|S_F\setminus S|$, the index $[S_F:S]$ or the co-dimension $\codim_{S^F}(S)$ are bounded by $2^{<\lambda}$.
  It follows that any chain between $S$ and $S_F$ has length at most $2^{<\lambda}$.
  We can thus put $\kappa=(2^{<\lambda})^+$ to satisfy condition \autoref{item:CloseKnitLongDecreasingSequence} of \autoref{dfn:CloseKnit}.

Define an action of $\Gamma$ on $(A\times\Gamma)$ by $\gamma'\cdot(a,\gamma)=(a,\gamma'\gamma)$. Then 
$$\begin{array}{lll}\gamma'F_{a,\gamma}&=\gamma'\gamma F_a&=F_{a,\gamma'\gamma}=F_{\gamma'\cdot(a,\gamma)},\\
\delta_{\varphi,n}(\gamma'S,\gamma'\cdot(a,\gamma))&=\delta_{\varphi,n}(\gamma'S,a,\gamma'\gamma)&=\delta_{\varphi,n}(S,a,\gamma),\\
(\gamma' S)^{F_{\gamma'\cdot(a,\gamma)}}&=(\gamma' S)^{\gamma'F_{a,\gamma}}&=\gamma'(S^{F_{a,\gamma}}),\end{array}$$
so everything is $\Gamma$-invariant. Clearly, we also have invariance under $\Aut(\fM)$.

By \autoref{thm:CloseKnit} there is some $N\in\fL$ invariant under the group of automorphisms of $\fL$ generated by $\Gamma \cup \Aut(\fM)$.
In particular $N$ is commensurable with $\fF$, type-definable over $\emptyset$, and $\Gamma$-invariant.
\end{proof}

\begin{remark} The usual metrisation of quotients modulo type-definable equivalence relations shows that \autoref{thm:schlichting} also holds for hyperdefinable families of commensurable subsets/subgroups/sub-vector spaces. (If the equivalence relation is given by an uncountable partial type, we first express the quotient as a type-definable subset of an infinite (possibly uncountable) product of hyperimaginary sets modulo countable equivalence relations, which in turn are equivalent to imaginary metric sorts.)
\end{remark}

\section{Fields}
For two fields $F$ and $K$ we say that $F$ is commensurably contained in $K$ if the degree $[FK:K]$ is finite. Then commensurable fields form an {\em upper} semi-lattice which need not be closed under meet. 
Theorem 1 applied to Example 1.(v) in \cite{wa98} implies in particular that if $K$ is a field, $\Gamma$ a group of automorphisms of $K$ and $\fF$ a family of uniformly commensurable subfields of $K$ such that any finite intersection of elements in $\fF$ is commensurable with $\fF$, then there is a $\Gamma$-invariant subfield of $K$ commensurable with $\fF$. However, the condition that finite intersections be commensurable with $\fF$ is much stronger than mere pairwise commensurability. In this section, we shall show that in case the extensions $FF'/F$ for $F,F'\in\fF$ are separable, or the Er\v sov invariant of any field in $\fF$ is finite, there still is a $\Gamma$-invariant commensurable subfield.

\satz\label{thm:fields} Let $K$ be a field, $\Gamma$ a group of automorphisms of $K$ and $\fF$ a $\Gamma$-invariant family of uniformly commensurable subfields of $K$. If $FF'/F$ is separable for all $F,F'\in\fF$, or if the Er\v sov invariant $[F:F^p]$ is finite for any $F\in\fF$, there is a $\Gamma$-invariant subfield $N$ commensurable with $\fF$.\esatz
\bew Clearly we may assume that $\fF$ consists of a single orbit under $\Gamma$. For a subfield $F\le K$ let $F^s$ be the separable closure of $F$ in $K$,
 and put $ K_s=\bigcap_{F\in\fF}F^s$, a $\Gamma$-invariant subfield of $K$ satisfying $K_s=K_s^s$. If $FF'/F$ is separable for all $F,F'\in\fF$, then $F\le K_s=F^s$ for all $F\in\fF$, and $K_s$ is Galois over $F\cap K_s=F$. We put $\fF_s=\fF$. 
 
Otherwise, by uniform commensurability of $\fF$ there is a finite power $q$ of $p$ such that $F^qF'/F'$ is separable for all $F,F'\in\fF$, where $p=\mbox{char}(K)>0$ is the characteristic. Then $F^q\le F\cap K_s\le F\le F^s$ for all $F\in\fF$; note that $F^s$ is normal over $F^q$, whence over $F\cap K_s$, and $F$ is the pure inseparable closure of $F\cap K_s$ inside $F^s$. Hence $F^s=F(F\cap K_s)^s$. Since $(F\cap K_s)^s\le K_s^s=K_s\le F^s$, we have $(F\cap K_s)^s=K_s$, so $K_s$ is Galois over $(F\cap K_s)$.
Moreover, as $F$ has finite Er\v sov invariant, $[F:F^q]$ is finite; it is independent of $F$ since $\fF$ consists of a single $\Gamma$-orbit. Thus $\fF_s=\{F\cap K_s:F\in\fF\}$ is a family of uniformly commensurable subfields of $ K_s$.

Let $G=\aut(K_s)$ with the topology of pointwise convergence and the induced action of $\Gamma$. For $F\in\fF$ put $H_F=\Gal(K_s/F\cap K_s)\le G$. Then $F\cap K_s=\Fix_{K_s}(H_F)$.

Now $H_{\gamma F}=H_F^{\gamma^{-1}}$ for $\gamma\in\Gamma$. Uniform commensurability of $\fF_s$ implies that $\fH=\{H_F:F\in\fF\}$ is a $\Gamma$-invariant family of uniformly commensurable closed subgroups of $G$. By Schlichting's Theorem \cite[Theorem 6(iii)]{bl} applied to the family $\fH$ of subgroups of $G\rtimes\Gamma$, there is a $\Gamma$-invariant subgroup $H\le G$ commensurable with $\fH$. Moreover $H$ is closed as it is a finite extension of a finite intersection of groups in $\fH$. Then $N=\Fix_{K_s}(H)$ is a $\Gamma$-invariant subfield of $ K_s$ commensurable with $\Fix_{K_s}(H_F)=F\cap K_s$ for any $H_F\in\fH$. As $F\cap K_s$ is commensurable with $F$, we are done.\ebew

\cite{wat04} gives examples of commensurable fields $F$ and $F'$ such that $FF'/F$ is purely inseparable, $FF'/F'$ is either separable or purely inseparable, and $F\cap F'$ has infinite degree in $F$ and in $F'$. As we have not been able to deal with this problem, we have not managed to prove \autoref{thm:fields} in full generality.

\end{document}